\theoremstyle{plain}
\newtheorem{definition}{Definition}[section]
\newtheorem{theorem}[definition]{Theorem}
\newtheorem*{theorem*}{Theorem}
\newtheorem{remark}[definition]{Remark}
\newtheorem*{remark*}{Remark}
\newtheorem*{sideremark*}{Side Remark}\newtheorem*{mt*}{Main Theorem}
\newtheorem*{claim*}{Claim}
\newtheorem*{q*}{Question}
\newtheorem{lemma}[definition]{Lemma}
\newtheorem*{corollary*}{Corollary}
\newtheorem*{proposition*}{Proposition}
\newcommand{\R}{\mathbb{R}}
\newcommand{\na}{\nabla}
\newcommand{\dd}{{\rm d}}
\newcommand{\p}{\partial}
\newcommand{\e}{\epsilon}
\newcommand{\map}{\rightarrow}
\newcommand{\G}{\Gamma}
\newcommand{\M}{\mathcal{M}}
\newcommand{\hau}{{\mathscr{H}}}
\newcommand{\pr}{{\bf pr}}
\newcommand{\XX}{{\mathscr{X}}}\newcommand{\YY}{{\mathscr{Y}}}\newcommand{\ZZ}{{\mathscr{Z}}}\newcommand{\TT}{{\mathscr{T}}}
\newcommand{\J}{\mathcal{J}}
\newcommand{\sss}{{\mathscr{S}}}
\newcommand{\gs}{{\overline{g}}}
\newcommand{\mres}{\mathbin{\vrule height 1.6ex depth 0pt width
0.13ex\vrule height 0.13ex depth 0pt width 1.3ex}}
\def\XXint#1#2#3{{\setbox0=\hbox{$#1{#2#3}{\int}$ }
\vcenter{\hbox{$#2#3$ }}\kern-.6\wd0}}
\numberwithin{equation}{section}
\numberwithin{figure}{section}
\title{A note on Alberti's Luzin-type theorem for gradients}
\author{Siran Li}
\address{Siran Li: Department of Mathematics, Rice University, MS 136
P.O. Box 1892, Houston, Texas, USA (77251)}
\email{\texttt{Siran.Li@rice.edu}}
\keywords{}
\subjclass[2010]{Primary: Luzin-type theorem; simplicial approximation; $C^2$-rectifiable set; contact geometry.}
\date{\today}
\begin{document}

\maketitle

\begin{abstract}
We give a ``soft'' proof of  Alberti's Luzin-type theorem in \cite{a} (G. Alberti, A Lusin-type theorem for gradients, \textit{J. Funct. Anal.} \textbf{100} (1991)), using elementary geometric measure theory and topology. Applications to the $C^2$-rectifiability problem are also discussed.

\end{abstract}

\section{Introduction}

This paper is devoted to a new proof of the following theorem by G. Alberti \cite{a}:
\begin{theorem}\label{thm: alberti}
Let $\Omega \subset \R^N$ be an open set with finite $N$-dimensional Lebesgue measure; $N \geq 2$. Let $v: \Omega \map \R^N$ be a Borel vectorfield. Then, for any $\e>0$, there exist an open set $A \subset \Omega$ and a function $\phi \in C^1_0(\Omega)$ such that $\hau^N(A) \leq \e \hau^N(\Omega)$ and $v=\na \phi$ on $\Omega \sim A$.
\end{theorem}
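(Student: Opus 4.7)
My plan is to combine a Lusin-type reduction, a simplicial decomposition, and iterated local building blocks, in line with the three ingredients flagged by the abstract (simplicial approximation, $C^2$-rectifiability, contact geometry).

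\textbf{Reduction.} First I would apply Lusin's theorem inside $\Omega$: for any $\delta>0$ there is a closed set $K\subset\Omega$ with $\hau^N(\Omega\setminus K)<\delta\,\hau^N(\Omega)$ and $v|_K$ continuous. A Tietze extension, composed with a smooth cut-off, produces a continuous field $\tilde v\in C_0(\R^N,\R^N)$ agreeing with $v$ on $K$. Since $\Omega\setminus K$ will be absorbed (together with an open thickening) into the eventual bad set $A$, it suffices to solve the problem with $\tilde v$ in place of $v$.

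\textbf{Simplicial setup.} Take a fine triangulation $\TT$ of $\overline\Omega$ of mesh $r$. The $(N{-}1)$-skeleton $\TT^{(N-1)}$ is a finite union of affine $(N{-}1)$-pieces, hence a $C^2$-rectifiable set of vanishing $\hau^N$-measure; its $\rho$-tubular neighbourhood $U_\rho$ is open with $\hau^N(U_\rho)\lesssim\rho$, and will be placed into $A$. The problem thus reduces to producing, on each open $N$-simplex $\sigma\in\TT$, a function $\phi_\sigma\in C^1_0(\sigma)$ with $\na\phi_\sigma=\tilde v$ off a set of relatively small measure; the global $\phi$ is then $\sum_\sigma\phi_\sigma$, whose $C^1$-regularity is automatic because the summands have disjoint supports in the open simplices.

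\textbf{Local building block and iteration.} On a simplex $\sigma$ I would iteratively build $\phi_\sigma$ so that its gradient catches up with $\tilde v$. The single-step block is the cutoff
\[
\phi^{(0)}_\sigma(x) \;=\; \chi_\sigma(x)\,\bigl[\tilde v(x_\sigma)\cdot(x-x_\sigma)\bigr],
\]
with $\chi_\sigma$ equal to $1$ on a homothetic shrinking $\sigma'\Subset\sigma$ occupying a $(1-\e/2)$-fraction; then $\na\phi^{(0)}_\sigma\equiv\tilde v(x_\sigma)$ on $\sigma'$. The residual $\tilde v-\na\phi^{(0)}_\sigma$ on $\sigma'$ has magnitude controlled by the modulus of continuity $\omega$ of $\tilde v$ at scale $r$. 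Cover $\sigma'$ by a Vitali family of smaller affinely congruent sub-simplices at scale $r_1\ll r$ and repeat the block on each, with the residual in place of $\tilde v$; iterate at scales $r_k\to 0$. If the scales decay fast enough that $\omega(r_k)$ is summable, the tails converge in $C^1$ and $\phi_\sigma:=\sum_k\phi^{(k)}_\sigma\in C^1_0(\sigma)$. The contact-geometric viewpoint organises this cleanly: each $\phi^{(k)}_\sigma$ is a Legendrian section of the $1$-jet bundle $J^1(\sigma,\R)$ with canonical contact form $\alpha=\dd z-p_i\,\dd x^i$, and the sum of compactly supported Legendrian perturbations remains the $1$-jet of a $C^1$ function—which is precisely the compatibility needed for the summation.

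\textbf{Measure bookkeeping and main obstacle.} At stage $k$ the newly created bad measure inside each sub-simplex is at most an $(\e/2^{k+1})$-fraction, so the cumulative bad measure in $\sigma$ is at most $(\e/2)\hau^N(\sigma)$. Setting
\[
A \;=\; U_\rho\,\cup\,(\Omega\setminus K)\,\cup\,\bigcup_{\sigma\in\TT}\bigl(\sigma\setminus G_\sigma^\infty\bigr),
\]
where $G_\sigma^\infty$ is the intersection of the stage-$k$ good subsets of $\sigma$, gives an open set of measure at most $\e\,\hau^N(\Omega)$ for $\delta$, $\rho$ small and $r$ fine enough. The main obstacle is exactly the third step: one must ensure simultaneously that (a) the cumulative bad measure stays below $\e/2$, (b) the $C^1$-norms of the $\phi^{(k)}_\sigma$ are summable, and (c) the gradient of the limit actually \emph{equals} $\tilde v$ pointwise on the intersection of good sets—rather than merely $C^0$-approximating it. This equality constraint is the reason the soft argument calls on the Legendrian (contact-geometric) framing, which keeps each stage holonomic, and on the $C^2$-rectifiability of the iteration boundaries to keep their measure negligible.
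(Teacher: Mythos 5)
Your proposal and the paper take genuinely different routes, and the differences are worth spelling out.

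\textbf{What you do vs.\ what the paper does.} Your scheme triangulates the \emph{domain} $\Omega$, forms an affine building block $\phi^{(0)}_\sigma=\chi_\sigma\,[\tilde v(x_\sigma)\cdot(x-x_\sigma)]$ on each simplex, and then iterates over shrinking scales $r_k$ with Vitali sub-simplices so that the accumulated gradient telescopes to $\tilde v(x_{\sigma_k(x)})\to\tilde v(x)$ on a large set. This is, in substance, Alberti's \emph{original} constructive proof: subdivide into cells, approximate by affine maps, cut off, and correct the residual at the next finer scale. The paper, by contrast, deliberately avoids this. It looks at the \emph{graph} $\G=\{(x,v(x))\}\subset\R^{2N}$, reduces via Federer's structure theorem to the case where $\G$ is rectifiable (showing the unrectifiable part projects to a null set by a transversality argument), triangulates $\G$ via Cairns--Whitehead, and invokes the \emph{simplicial approximation theorem} to produce a PL map $v_3$ that is $C^0$-close to $v$ and is exactly a gradient on each open $N$-simplex. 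This yields the rough approximation (Lemma~2.1) with $|v-\na\phi|\le\eta$ outside a small set, with \emph{no} iteration at that stage. The passage from approximate to exact equality on a large set is then handled by a separate, purely functional-analytic iteration (Theorem~3.1), whose engine is the Tietze extension property of $C^0$. The two decompositions (domain-triangulation vs.\ graph-triangulation) are not equivalent presentations: the graph approach sidesteps the modulus-of-continuity/scale bookkeeping entirely and isolates the iteration into an abstract lemma.

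\textbf{Where your sketch is incomplete.} The quantitative heart of your iteration is glossed over. Each correction's $C^1$-norm is governed by the cutoff's transition layer: if the bad-measure fraction at stage $k$ is $\e/2^{k+1}$, the layer width is $\sim r_k\,\e\,2^{-(k+1)}$, so $|\na\chi_{\sigma_k}|\sim 2^{k+1}/(\e\,r_k)$ and the correction's $C^1$-norm is $\sim\omega(r_{k-1})\,2^{k+1}/\e$. Summability therefore forces the scales $r_k$ to shrink fast enough that $\omega(r_k)\lesssim 4^{-k}$, which is achievable but is precisely the delicate trade-off one must make explicit. Presenting the convergence as ``if the scales decay fast enough'' defers the whole content of the proof; Alberti's original paper spends most of its effort on exactly these estimates (plus a convolution-based smoothing in the transition layers, which your raw cutoff formula does not yet provide). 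Without stating and verifying these bounds, the proposal does not close.

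\textbf{Misapplied ingredients.} The contact-geometric and $C^2$-rectifiability framings do no mathematical work in your argument. The observation that $x\mapsto(x,\phi(x),\na\phi(x))$ is a Legendrian (holonomic) section of $J^1$ is automatic for \emph{any} $C^1$ function, and the sum of two holonomic sections attached to $\phi_1,\phi_2$ is tautologically the one for $\phi_1+\phi_2$; there is no compatibility condition being verified, so ``Legendrian framing'' does not address item (c) --- that is handled simply by $C^1$ convergence plus the telescoping identity $\na\bigl(\sum_{j\le k}\phi^{(j)}\bigr)(x)=\tilde v(x_{\sigma_k(x)})\to\tilde v(x)$. Similarly, the $(N-1)$-skeleton of a triangulation has $\hau^N$-measure zero for elementary reasons; its $C^2$-rectifiability is irrelevant. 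In the paper, $C^2$-rectifiability and the tautological $1$-form enter only in Section~4, in the \emph{application} to producing a rectifiable current that is not $C^2$-rectifiable (Theorem~\ref{thm: current}), not in the proof of Theorem~\ref{thm: alberti} itself. Citing them here as structural explanations for the convergence is a red herring.
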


Alberti's theorem says that any Borel vectorfield is ``nearly'' --- in the sense of  Luzin \cite{l} --- the gradient of a scalar potential. It can also be interpreted as follows: any differential $1$-form is ``nearly'' exact. The latter statement readily generalises to differential forms of arbitrary degree on Riemannian manifolds; see \cite{mp}, Proposition~2.3 for the Euclidean setting. 

Various improvements and generalisations of Alberti's theorem have been studied; {\it cf.} Moonens--Pfeffer \cite{mp} for an {\it a.e.}-version of Theorem~\ref{thm: alberti} (namely, $\e=0$ therein) and extension to charges/flat cochains, Francos \cite{fr} for extension to higher-order derivatives, and David \cite{da} to metric measure spaces, as well as the  references cited therein.

Alberti's original proof of Theorem~\ref{thm: alberti}  in \cite{a} is constructive: one divides $\Omega$ into dyadic cubes, approximates $v$ by affine functions on the dyadic cubes at each level, smooths in ``transition layers'' via convolution, and iteratively corrects resulting errors at the next level. 

Here we present an alternative proof using geometric measure theory and  topology. We take a new perspective by looking at {\em graphs} of functions, rather than the functions {\it per se}, and finding approximations to the graphs by topological arguments. This approach is  motivated by, among others, the seminal works of Giaquinta--Modica--Sou\u{c}ek \cite{gms1, gms2} on harmonic maps.


\section{A Rough Approximation}

Following Alberti's original approach (\cite{a}, Lemma~7), we first establish 
\begin{lemma}\label{lemma: key}
Let $\Omega \subset \R^N$ be an open set with finite $N$-dimensional Lebesgue measure ($N\geq 2$), let $v:\Omega\map\R^N$ be a Borel vectorfield, and let $\eta,\e,\vartheta$ be arbitrary positive numbers. There exist a compact set $K \Subset \Omega$ and a function $\phi \in C^1_0(\Omega)$ such that $\hau^N(\Omega\sim K) \leq \e\hau^N(\Omega)$, $\|\phi\|_{C^0(\Omega)} \leq \vartheta$,  and  $|v-\na \phi|\leq\eta$ on $K$.
\end{lemma}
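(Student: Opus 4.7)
The plan is to reduce Lemma \ref{lemma: key} to a Whitney-type extension problem on a compact set built from mutually separated affine pieces. First, by Lusin's theorem applied to the Borel vectorfield $v$, I would extract a compact set $K_0 \Subset \Omega$ with $\hau^N(\Omega\sim K_0) \leq \tfrac{\e}{2}\hau^N(\Omega)$ on which $v$ is continuous, hence uniformly continuous and bounded, say $|v|\leq M$ on $K_0$. Covering $\Omega$ by a dyadic grid of cubes of diameter $\delta$, set $E_i := K_0 \cap Q_i$ for the finitely many cubes $Q_i$ meeting $K_0$; for $\delta$ sufficiently small, uniform continuity guarantees that the oscillation of $v$ on each $E_i$ is at most $\eta/2$, so choosing $x_i\in E_i$ and $a_i := v(x_i)$ yields $|v-a_i|\leq\eta/2$ on $E_i$. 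I would then shrink $\delta$ further to ensure $M\delta\leq\vartheta/2$.

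Next, by inner regularity of the Lebesgue measure, I replace each $E_i$ by a compact $F_i\subset E_i$ with $\sum_i\hau^N(E_i\sim F_i)\leq\tfrac{\e}{2}\hau^N(\Omega)$. Because the $F_i$ are pairwise disjoint and compact, they are automatically separated by some $\sigma>0$. Set $K:=\bigcup_i F_i$ and prescribe a $1$-jet on $K$ by $f(x):=a_i\cdot(x-x_i)$ and $g(x):=a_i$ for $x\in F_i$. Whenever $x,y\in K$ satisfy $|x-y|<\sigma$ they lie in the same $F_i$, on which $f(y)-f(x)-g(x)\cdot(y-x)\equiv 0$, so Whitney's compatibility condition holds vacuously. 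The classical Whitney extension theorem thus produces $\tilde\phi\in C^1(\R^N)$ with $\tilde\phi(x)=a_i\cdot(x-x_i)$ and $\na\tilde\phi(x)=a_i$ for every $x\in F_i$.

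By the choice of $\delta$, $|\tilde\phi|\leq M\delta\leq\vartheta/2$ on $K$, so by continuity there is an open neighborhood $U$ of $K$ with $\overline U\Subset\Omega$ and $|\tilde\phi|\leq\vartheta$ throughout $U$. Multiplying by a smooth cutoff $\chi\in C^\infty_c(\Omega)$ equal to $1$ on a neighborhood of $K$ contained in $U$, the product $\phi:=\chi\tilde\phi$ lies in $C^1_0(\Omega)$ with $\|\phi\|_{C^0(\Omega)}\leq\vartheta$; on $K$ one has $\chi\equiv 1$ and $\na\chi\equiv 0$, so $\phi$ and $\na\phi$ coincide with $\tilde\phi$ and $\na\tilde\phi$ there. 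Hence $|v-\na\phi|\leq\eta/2\leq\eta$ on $K$, while
\[
\hau^N(\Omega\sim K)\leq\hau^N(\Omega\sim K_0)+\sum_i\hau^N(E_i\sim F_i)\leq\e\hau^N(\Omega),
\]
as required.

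The single conceptual step that does all the work --- and what makes the argument ``soft'' relative to Alberti's hands-on construction with convolutional transition layers --- is the observation that once a positive gap between the affine pieces is enforced, Whitney's compatibility condition is trivialised and every mismatch between neighbouring affine values is absorbed into the permitted bad set $\Omega\sim K$. I expect no analytic obstacle: the only care required is bookkeeping the successive budgets ($\e/2$ each for Lusin and inner regularity on the measure side; $\vartheta/2$ both for the diameter-scale estimate $M\delta\leq\vartheta/2$ and for the subsequent cutoff on the $C^0$ side).
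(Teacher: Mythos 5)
Your proof is correct, but it takes a genuinely different route from the paper's. The paper reduces to smooth compactly supported $v$, shows the graph $\Gamma=\mathrm{graph}(v)$ is rectifiable via Federer's structure theorem and a transversality argument, triangulates $\Gamma$ by Cairns--Whitehead, and applies the simplicial approximation theorem to replace $v$ by a piecewise-affine map; the final potential $\psi$ is only $C^1$ away from the $(N-1)$-skeleton, so a small neighbourhood of that skeleton is excised and a smoothing performed. Your argument skips all of this geometric/topological machinery: after Lusin's theorem you approximate $v$ by cube-wise constants $a_i$ (hence $\nabla\phi$ by cube-wise constants and $\phi$ by cube-wise affine pieces), shrink each piece to a compact $F_i$ so that the finitely many pieces are \emph{positively separated}, and then the Whitney extension theorem glues the incompatible affine pieces into a global $C^1$ function for free, since Whitney compatibility degenerates to a tautology on a separated union. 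Conceptually this is closer to Alberti's original dyadic/affine scheme than to the paper's graph-triangulation viewpoint, but where Alberti mollifies in explicit transition layers and iterates, you delegate all the smoothing to Whitney's theorem --- arguably an even ``softer'' mechanism than the simplicial one. Two small points worth making explicit: the dyadic cubes should be taken half-open (so that the $E_i=K_0\cap Q_i$ are pairwise disjoint, which together with finiteness is what makes the compact $F_i\subset E_i$ positively separated), and the cutoff $\chi$ should be supported in the neighbourhood $U$ on which $|\tilde\phi|\le\vartheta$, equal to $1$ only on a smaller neighbourhood of $K$, so that the $C^0$ bound is inherited by $\phi=\chi\tilde\phi$. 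What the paper's heavier route arguably buys is a viewpoint (graphs, currents, triangulations) that is uniform with the later sections on rectifiable currents; your route buys simplicity and directness at the level of Lemma~\ref{lemma: key} itself.
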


Here, as in Moonens--Pfeffer \cite{mp}, we require $\phi$ to satisfy the uniform smallness condition $\|\phi\|_{C^0(\Omega)} \leq \vartheta$ in addition to Alberti's original result. Roughly speaking, $\phi$ is a ``micro-oscillation''.

\begin{proof} The arguments are divided into five steps.

\smallskip
\noindent
{\bf Step 1. Reduction to continuous, bounded vectorfields.}
    
    Let $\kappa$ be a small positive number to be specified later. Since $v$ is Borel, there is a finite number $\Lambda=\Lambda(\kappa)$ such that $B=B(\kappa):= \{x \in \Omega: |v(x)|>\Lambda\}$ satisfies $\hau^N(B)<\kappa$. On the other hand, applying the classical Luzin theorem \cite{l}, we may find a Borel set $B' \subset \Omega$ and a continuous vectorfield $v_0: \Omega \map \R^N$ such that $\hau^N(B')<\kappa$ and $v_0={v}$ on $\Omega \sim B'$. Let us set
     \begin{equation*}
     v_1 := \begin{cases}
     v_0\qquad \text{ on } \Omega \sim B,\\
     \Lambda |v_0|^{-1}v_0\qquad \text{ on } B.
     \end{cases}
     \end{equation*}
Clearly, $v_1$ coincides with $v$ on $\Omega \sim (B \cup B')$, $|v_1| \leq \Lambda$ on $\Omega$, and $v_1$ is continuous on $\Omega$.

Suppose now that Lemma~\ref{lemma: key} is already proved for the continuous, bounded vectorfield $v_1$; that is, we have a compact set $K_1 \Subset \Omega$ and a function $\phi \in C^1_c(\Omega)$ satisfying $\hau^N(\Omega \sim K_1) \leq \nicefrac{\e \hau^N(\Omega)}{2}$ and $|v_1-\na \phi|\leq {\eta}$ on $K_1$. Then, for the compact set $K:= K_1 \sim [{\rm int}(B \cup B')]$,  we have  $\hau^N(\Omega \sim K) \leq \nicefrac{\e \hau^N(\Omega)}{2} + 2\kappa$ and $|v_1-\na \phi|\leq {\eta}$ on $K$. Thus one may conclude Lemma~\ref{lemma: key} for the Borel vectorfield $v$ by taking $\kappa := \nicefrac{\e \hau^N(\Omega)}{4}$.


\smallskip
\noindent
{\bf Step 2. Reduction to smooth, compactly supported vectorfields. }

For each sufficiently small $a>0$, we denote as usual $$\Xi_a(\bullet):= a^{-N}\Xi_1(\nicefrac{\bullet}{a})$$ where $\Xi_1$ is the standard mollifier on $\R^N$, and $$\Omega_a:=\big\{x \in \Omega: {\rm dist}(x, \R^N \sim \Omega)>a\big\}.$$  For any continuous bounded $v:\Omega \map \R^N$ we take $$v_2 := (v \chi_{\overline{\Omega_\sigma}}) \star \Xi_{\nicefrac{\sigma}{10}}  \in C^\infty_c(\Omega;\R^N)$$ with $\sigma>0$ to be  specified. In fact, ${\rm spt}(v_2) \Subset \Omega_{\nicefrac{4\sigma}{5}}$. Here, $\chi_E$ is the characteristic function of set $E$, and $\star$ is the convolution operator.  
	
	Suppose that Lemma~\ref{lemma: key} is already established for smooth, compactly supported vectorfields. Then, for a compact set $K_2 \Subset \Omega_{\nicefrac{4\sigma}{5}}$ and a function $\phi_2 \in C^1_c(\Omega_{\nicefrac{4\sigma}{5}})$, there hold  $|v_2 - \na \phi_2| \leq \eta$ on $K_2$ and $\hau^N(\Omega_{\nicefrac{4\sigma}{5}} \sim K_2) \leq \nicefrac{\e \hau^N(\Omega_{\nicefrac{4\sigma}{5}})}{2}$. Take $K\equiv K_2$ and $\phi \equiv$ extension-by-zero of $\phi_2$. Clearly $|v-\na \phi| \leq \eta$ on $K$, and for the case $\hau^N(\Omega_{\nicefrac{4\sigma}{5}}\sim K)>0$ we have
	\begin{align}\label{ratios}
	\frac{\hau^N (\Omega \sim K)}{\hau^N(\Omega)} &=  \frac{\hau^N (\Omega \sim K)}{\hau^N(\Omega_{\nicefrac{4\sigma}{5}} \sim K)}\, \frac{\hau^N(\Omega_{\nicefrac{4\sigma}{5}} \sim K)}{\hau^N(\Omega_{\nicefrac{4\sigma}{5}})}\,\frac{\hau^N(\Omega_{\nicefrac{4\sigma}{5}})}{\hau^N(\Omega)}\nonumber\\
	&\leq \frac{\e}{2} \, \frac{\hau^N (\Omega \sim K)}{\hau^N(\Omega_{\nicefrac{4\sigma}{5}} \sim K)}\,\frac{\hau^N(\Omega_{\nicefrac{4\sigma}{5}})}{\hau^N(\Omega)}.
	\end{align}
Since $\nicefrac{\hau^N(\Omega_a)}{\hau^N(\Omega)} \nearrow 1$ as $a \searrow 0$ for the open set $\Omega\subset\R^N$, with $\sigma$ sufficiently small, the last two factors in the final line in \eqref{ratios} can be chosen arbitrarily close to $1$. Thus ${\hau^N (\Omega \sim K)} \leq \e{\hau^N(\Omega)}$. The above inequality holds trivially when $\hau^N(\Omega_{\nicefrac{4\sigma}{5}}\sim K)=0$, by shrinking $\sigma>0$ if necessary. This proves Lemma~\ref{lemma: key} for the continuous bounded vectorfield $v$.

\smallskip
\noindent
{\bf Step 3. Reduction to vectorfields with rectifiable graphs.}

In view of the preceding step, from now on we may assume that $v \in C^\infty_c(\Omega_{\sigma};\R^N)$ for some fixed $\sigma>0$. For the graph
\begin{equation*}
\G := {\rm graph}_\Omega\,(v):= \big\{\big(x,v(x)\big):\,x\in\Omega\big\} \subset \Omega \times \R^N,
\end{equation*}
we have
\begin{align*}
\hau^N(\G) := \int_\Omega \sqrt{1+|\na v|^2}\,\dd\hau^N \leq M,
\end{align*}
where $M$ is a finite number  depending only on $\|v\|_{C^1(\Omega;\R^N)}$ and the $N$-dimensional Lebesgue measure of $\Omega$. 
Applying Federer's structure theorem (\cite{fe1, w}), we get the decomposition $$\Gamma = Y \sqcup Z,$$ where $Y$ is $(\hau^N, N)$-rectifiable and $Z$ is purely $N$-unrectifiable.

Denote by $\pr$ the projection of $\Omega \times \R^N$ onto the first coordinate; we {\em claim} that 
\begin{equation}\label{claim}
\hau^N\big({\pr}(Z)\big)=0.
\end{equation}
Indeed, if it were false, there would be an open ball $\mathcal{O} \subset {\pr}(Z) \subset \Omega$ with $\hau^N(\mathcal{O})>0$. By the boundedness of $\Omega$ and that $v\in C^\infty_c(\Omega;\R^N)$, the supremum of $|\na v|$ over $\Omega$ is finite; thus the following transversality result holds:
\begin{equation}\label{transversality}
\inf \Big\{{\rm dist} ( T_\xi \G, \mathcal{V}) :\, \xi \in \G,\, \mathcal{V} \in {\bf Gr}(N, 2N) \text{ is  orthogonal to the image of $\pr$}  \Big\}  \geq \e_0>0.
\end{equation} 
Throughout, ${\bf Gr}(N, 2N)$ is the Grassmannian manifold of $N$-planes in $\R^{2N}$ with the natural topology: the distance between two 
$N$-planes is the operator norm of the difference of the corresponding projections. Denote by $\mu$ the Haar measure on ${\bf Gr}(N,2N)$ with a fixed normalisation.

By \eqref{transversality}, there is a neighbourhood $\mathcal{N} \subset {\bf Gr}(N,2N)$ such that
\begin{itemize}
\item
$\mathcal{N}$ contains the horizontal section $\R^N \times \{0\}$;
\item
Each $\Pi \in \mathcal{N}$ contains the origin of $\R^{2N}$;
\item
$\mu (\mathcal{N})>0$; and
\item
$\G_{\mathcal{O}}:={\rm graph}_{\mathcal{O}}(v)$ is  a graph over each $N$-plane in $\mathcal{N}$.
\end{itemize}

Writing ${\pr}_\Pi$ for the projection of $\G_\mathcal{O}$ onto $\Pi \in \mathcal{N}$, we deduce that $\{{\pr}_\Pi:\Pi\in\mathcal{N}\}$ constitutes a family continuous bijections onto their images; furthermore, this family is  continuous in $\Pi$. Therefore, we obtain a continuous map:
\begin{equation*}
\mathcal{N} \ni \Pi \,\longmapsto \, \hau^N\big({\pr}_\Pi (\G_\mathcal{O})\big) \in \R_+.
\end{equation*}
Noticing that ${\pr}_{\R^N\times\{0\}} \equiv {\pr}$ and $\hau^N(\mathcal{O})>0$, we have found a neighbourhood of $N$-planes  of positive $\mu$-measure about $\R^N\times\{0\}$, such that projections of $\G$ in these directions all have positive $\hau^N$-measure. This contradicts the unrectifiability of $Z$; hence the {\em claim} \eqref{claim} follows.

From now on, one assumes that the graph of $v$ is $(\hau^N,N)$-rectifiable.

\smallskip
\noindent
{\bf Step 4. Reduction to  vectorfields mapping between PL-manifolds.}

Let $v \in C^\infty_c(\Omega;\R^N)$ be such that $\G :={\rm graph}_\Omega (v)$ is $(\hau^N,N)$-rectifiable and that ${\rm spt}\,(v) \Subset \Omega_{\sigma}$ for some $\sigma>0$. In light of the statement of Lemma~\ref{lemma: key}, one has the freedom of modifying $v$ on arbitrarily $\hau^N$-small sets. Thus, using the graphical structure of $\G$ and the definition of rectifiablility, we can assume in the sequel that $\G$ is a $C^1$-submanifold embedded in $\R^{2N}$.

Now, the classical Cairn--Whitehead theorem (\cite{c, wh}) implies that $\G$ has an essentially unique  triangulation $\mathcal{K}_\G$. As $\G={\rm graph}_\Omega(v)$ for $v$ smooth, it induces a triangulation $\mathcal{K}_{\sigma}$ of $\Omega_\sigma \supset {\rm spt}\,(v)$ via the projection $\pr: \Omega \times \R^N\map \Omega$. Hence, here and hereafter, we may view $v$ as a map between PL-manifolds, namely
\begin{equation*}
v: |\mathcal{K}_{\sigma}| \subset \R^N \longrightarrow |\mathcal{K}_\G| \subset \R^{2N}.
\end{equation*}
Throughout, we use $|\mathcal{K}|$ to denote the geometrical realisation of triangulation $\mathcal{K}$.

\smallskip
\noindent
{\bf Step 5: Completion of the proof by simplicial approximation.}

Let $\eta>0$ be arbitrary. By the simplicial approximation theorem (\cite{h}, Theorem~2C.1, p.177), one can find by taking successive barycentric subdivisions of $\mathcal{K}_{\Omega_\sigma}$ (not relabelled) a simplicial map $v_3: |\mathcal{K}_{\Omega_{\sigma}}| \map |\mathcal{K}_\G|$ such that 
\begin{equation}\label{v3}
\|v_3 - v\|_{C^0(\R^N;\R^N)} \leq \frac{\eta}{2}.
\end{equation}
Here, as usual, we identify $v_3$ with its extension-by-zero defined on $\R^N$.

On each simplex $\tau$ of the corresponding barycentric subdivision, $v_3$ is equal to the linear combination of its values at the vertices. Since $v_3$ is a simplicial map, it equals to $\na \psi$ for some $\psi:|\mathcal{K}_{\Omega_{\sigma}}|\map\R$ on each $N$-simplex $\tau$. Since $v_3$ is a $C^0$-map, $\psi$ is $C^1$ in the interior of any such $\tau$. Moreover, since $v$ is uniformly continuous on $\overline{\Omega_\sigma}$, by refining the barycentric subdivisions we can make the oscillation of $v_3$ on any such $\tau$ arbitrarily small. By subtracting a constant, one may further assume that
\begin{equation*}
\max\Big\{\|\psi\|_{C^0(\tau)}:\,\tau \text{ is an $N$-simplex of $\mathcal{K}_{\Omega_\sigma}$}\Big\} \leq \frac{\vartheta}{2}. 
\end{equation*}


Finally, let us modify $\psi$ to obtain the desired map $\phi$, which is $C^1$ on the whole domain $\Omega$. Note that $\psi$ is $C^1$ except on the closure of the $(N-1)$-skeleton of $\mathcal{K}_{\Omega_{{\sigma}}}$, which is a null set with respect to the $N$-dimensional Lebesgue measure. Take an open neighbourhood $\mathcal{O}_2$ of the $(N-1)$-skeleton of arbitrarily small measure, {\it e.g.}, $\hau^N(\mathcal{O}_2) \leq {\e \hau^N(\Omega)}$. A standard smoothing argument then yields $\phi \in C^1_c(\Omega)$ such that $\phi\equiv\psi$ on $\Omega_{\sigma}\sim \mathcal{O}_2$ and that $\|\phi\|_{C^0(\Omega)} \leq \vartheta$. Thanks to \eqref{v3}, we can now complete the proof by suitably choosing $\rho$ and setting $K:= \overline{\Omega \sim \mathcal{O}_2}$.  \end{proof}

The above proof is ``soft'': only geometric measure theoretic and topological arguments are involved, but not hard analysis. It relies essentially on the graphical structure of $\G$. With a little additional effort we can also recover the quantitative $L_p$-estimates in \cite{a}, Lemma~7.

\section{Rectifying the Error}
In this section, we explain how to deduce Theorem~\ref{thm: alberti} ({\it i.e.}, Theorem~1 in Alberti \cite{a})  from Lemma~\ref{lemma: key}. Our treatment is adapted from \cite{a}; nevertheless, we shall establish an approximation theorem in the more general setting of functional analysis. By doing so, we emphasise the central r\^{o}le played by the extension property \eqref{ext ppt}, which imposes severe difficulties for generalising Theorem~\ref{thm: alberti} to function spaces with higher regularity; see Remark~\ref{rem: ext ppt} below.

Let $\XX$ and $\ZZ$ be Banach spaces consisting of Borel functions from an open subset $\Omega \subset \R^N$ (that is, $\XX:=$ the completion of Borel functions on $\Omega$ with respect to $\|\bullet\|_\XX$, and similarly for $\ZZ$), let $\YY \leq \ZZ$ be a Banach subspace, and let $T:\YY \map \XX$ be a bounded linear operator. Suppose that there is a uniform constant $C_0>0$ such that
\begin{equation}\label{eq norm}
{C_0}^{-1} \|\phi\|_{\YY} \leq \|\phi\|_{\ZZ} + \|T\phi\|_{\XX} \leq C_0\|\phi\|_{\YY}.
\end{equation}
For a subset $\Omega' \Subset \Omega$, we denote by $\XX(\Omega')$ the $\|\bullet\|_\XX$-completion of Borel functions over $\Omega'$, and similarly for $\YY(\Omega'), \ZZ(\Omega'), \ldots$. Also, suppose that
\begin{equation}\label{top comparison}
\text{the norm topology of $\XX$ is stronger than the pointwise topology on $\Omega$},
\end{equation}
and that $\|\bullet\|_\XX$ satisfies the {\em extension property}:
\begin{align}\label{ext ppt}
&\text{For each compact subset $K\Subset \Omega$,  each $f \in \XX(K)$, and each $s>0$, there exists $\overline{f} \in \XX\equiv \XX(\Omega)$}\nonumber\\
&\text{such that $\overline{f}|K=f$ and $\|\overline{f}\|_{\XX} \leq (1+s)\|f\|_{\XX(K)}$.}
\end{align}
Then, we have:

\begin{theorem}\label{thm: approx scheme}
Let $\XX$, $\YY$, $\ZZ$, $\Omega$, and $T$  be as above (in particular, the hypotheses \eqref{eq norm}\eqref{top comparison}\eqref{ext ppt} hold); let $v \in \XX$. Assume that for any triplet of positive numbers $\{\eta,\e,\vartheta\}$, there are a compact set $K \Subset \Omega$ and an element $\phi \in \YY$ such that $\hau^N(\Omega \sim K) \leq \e \hau^N(\Omega)$, $\|\phi\|_{\ZZ} \leq \vartheta$, and $\|v-T\phi\|_{\XX(K)} \leq \eta$. Then, for arbitrary positive numbers $\{\delta, \kappa\}$, we can find an open set $A \subset \Omega$ and an element $\tilde\phi \in \YY$ such that $\hau^N(A) < \delta \hau^N(\Omega)$, $\|\tilde\phi\|_{\ZZ} \leq \kappa$, and that $v=T\tilde\phi$ on $\Omega \sim A$.
\end{theorem}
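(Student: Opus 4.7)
The plan is to iterate the hypothesis, driving the residual to zero geometrically, and then pass to a limit. Fix positive summable sequences $\{\eta_j\}, \{\varepsilon_j\}, \{\vartheta_j\}$ with $\eta_j\downarrow 0$, $\sum_j\varepsilon_j<\delta$, and $\sum_j\vartheta_j<\kappa$. Set $v_0:=v$. Inductively, at step $j\geq 1$, apply the standing hypothesis to $v_{j-1}\in\XX$ with triplet $(\eta_j,\varepsilon_j,\vartheta_j)$ to obtain a compact $K_j\Subset\Omega$ and $\phi_j\in\YY$ satisfying $\hau^N(\Omega\sim K_j)\leq\varepsilon_j\hau^N(\Omega)$, $\|\phi_j\|_\ZZ\leq\vartheta_j$, and $\|v_{j-1}-T\phi_j\|_{\XX(K_j)}\leq\eta_j$. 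The residual $v_{j-1}-T\phi_j\in\XX(K_j)$ is then extended via~\eqref{ext ppt} (with $s=2^{-j}$) to $v_j\in\XX(\Omega)$ with $\|v_j\|_\XX\leq(1+2^{-j})\eta_j\to 0$, which serves as the input for the next step.

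Introduce the partial sums $S_n:=\sum_{j=1}^n\phi_j$, the decreasing compacta $\tilde K_n:=\bigcap_{j=1}^n K_j$, and the open set $A:=\Omega\sim\bigcap_{n}\tilde K_n=\bigcup_n(\Omega\sim\tilde K_n)$. Elementary bookkeeping gives $\hau^N(A)<\delta\hau^N(\Omega)$ and $\|S_n\|_\ZZ\leq\sum_{j}\vartheta_j<\kappa$, while a telescoping identity yields $v-TS_n=v_n$ on $\tilde K_n$, so $\|v-TS_n\|_{\XX(\tilde K_n)}\leq(1+2^{-n})\eta_n\to 0$. Summability of $\{\vartheta_j\}$ makes $\{S_n\}$ Cauchy in $\ZZ$, so it converges to some $\tilde\phi\in\ZZ$ with $\|\tilde\phi\|_\ZZ\leq\kappa$.

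What remains is to promote $\tilde\phi$ to an element of $\YY$ satisfying $T\tilde\phi=v$ on $\Omega\sim A$. Since $\|v_n\|_\XX\to 0$ and $v_n=v-TS_n$ on the compacta $\tilde K_n$ (which all contain $\Omega\sim A$), property~\eqref{top comparison} yields the pointwise convergence $TS_n(x)\to v(x)$ on $\Omega\sim A$. The norm equivalence~\eqref{eq norm} is designed precisely to play the role of a closed-graph criterion: if one can show $\{TS_n\}$ is Cauchy in $\XX(\Omega)$, then $\{S_n\}$ is Cauchy in $\YY$ and not merely in $\ZZ$, and the desired conclusion $\tilde\phi\in\YY$ with $T\tilde\phi=v$ on $\Omega\sim A$ follows by taking limits in $\XX$.

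The main obstacle, accordingly, is producing this global $\XX$-Cauchyness of $\{TS_n\}$: the hypothesis controls $T\phi_j$ only on $K_j$ via $\|v_{j-1}-T\phi_j\|_{\XX(K_j)}\leq\eta_j$, whereas off the shrinking $K_j$ the elements $T\phi_j\in\XX(\Omega)$ are \emph{a priori} uncontrolled. The resolution should invoke~\eqref{ext ppt} a second time, now to extend $v-TS_n$ from $\tilde K_n$ to $\Omega$ in a way consistent across $n$, and identify the extended sequence as a candidate representative for $T\tilde\phi$ on $\Omega\sim A$, while the small $\XX$-correction on $A$ is absorbed into the bound on $\|\tilde\phi\|_\ZZ$. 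This is precisely the delicate interplay between~\eqref{eq norm} and~\eqref{ext ppt} that the theorem is designed to exploit, and is the crux of the argument.
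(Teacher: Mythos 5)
Your iteration scheme, the definitions of $A$ and $\tilde\phi$, the measure estimate, and the $\ZZ$-bound all match the paper's construction. But you stop at precisely the step that the theorem lives or dies on: you reduce the problem to showing that $\{TS_n\}$ is $\XX$-Cauchy (equivalently, that $\sum_j\phi_j$ converges in $\YY$), announce that this is ``the crux of the argument,'' sketch a vague resolution involving a second use of \eqref{ext ppt}, and leave it there. A proof cannot end by naming the gap.

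The paper's resolution of the crux is short and you should see how it closes the loop. By construction, $T\varphi_{n+1}=v_n-v_{n+1}$ on $K_{n+1}$, and after applying \eqref{ext ppt} the extended residuals satisfy $\|v_n\|_\XX\leq 8^{-(n-1)}\eta$ for all $n\geq 1$. Hence the terms $\|T\varphi_j\|_\XX$ decay geometrically (the telescoping identity, together with the extension bound, makes $T\varphi_j$ small in $\XX$ and not merely in $\XX(K_j)$), so $\sum_j\|T\varphi_j\|_\XX<\infty$. Feeding this and $\sum_j\|\varphi_j\|_\ZZ<\infty$ into \eqref{eq norm} gives $\sum_j\|\varphi_j\|_\YY\leq C_0\big(\|v\|_\XX+\nicefrac{\eta}{2}+\kappa\big)<\infty$, and absolute convergence of the series in the Banach space $\YY$ yields $\tilde\phi\in\YY$ directly --- there is no need to first establish Cauchyness of $\{TS_n\}$ as a separate task. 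The identity $v=T\tilde\phi$ on $\Omega\sim A$ then follows from the bound $\|v-TS_n\|_{\XX(\bigcap_{i\leq n}K_i)}\to 0$ (which you did record) combined with \eqref{ext ppt} and \eqref{top comparison} to pass from vanishing $\XX$-norm to pointwise vanishing. Your observation that the hypothesis only controls $T\phi_j$ on $K_j$ is a fair point to be careful about, but the way through is the telescoping identity and the extension bound, not a vaguer appeal to ``consistent extensions across $n$.''
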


Theorem~\ref{thm: approx scheme} states that, starting from a ``rough'' Lusin-type approximation result with three positive parameters $\{\eta,\e, \vartheta\}$, we can obtain a refined Lusin-type result which remains valid for $\eta=0$; meanwhile, restrictions on the weaker norm $\|\bullet\|_\ZZ$ can be retained. 

Assuming this, Alberti's Theorem~\eqref{thm: alberti} can be deduced as an immediate corollary:

\begin{proof}[Proof of Theorem~\ref{thm: alberti}]
Take $\XX = C^0_0(\Omega;\R^N)$, $\YY= C^1_0(\Omega)$, $\ZZ = C^0_0(\Omega)$, and $T=\na:\YY \map \XX$ as in Theorem~\ref{thm: approx scheme}. In this case, the extension property \eqref{ext ppt} follows from Tietze's theorem (which even allows $s=0$ in \eqref{ext ppt}), and assumptions of Theorem~\ref{thm: approx scheme} on the rough approximation with parameters $\{\eta,\e,\varphi\}$ are verified by Theorem~\ref{lemma: key}. \end{proof}

\begin{proof}[Proof of Theorem~\ref{thm: approx scheme}]
	We construct an iteration scheme: $$\Big\{(v_n, K_n, \varphi_n)\Big\}_{n=0,1,2,\ldots} \subset \XX \times \Big\{\text{compact subsets of $\Omega$}\Big\} \times \YY.$$ 
	
	Take $v_0:=v$, $K_0 := \overline{\Omega}$, and $\varphi_0:=0$. 
	
	Assume that $(v_n, K_n, \varphi_n)$ has been constructed; we shall define $(v_{n+1}, K_{n+1}, \varphi_{n+1})$. First, by assumption, we can take a compact set $K_{n+1}$ such that $\hau^N(\Omega \sim K_{n+1}) \leq 2^{-n-1}\delta \hau^N(\Omega)$, and take $\varphi_{n+1} \in \YY$ such that $\|v_n - T\varphi_{n+1}\|_{\XX(K_{n+1})} \leq 16^{-n}\eta$ as well as $\|\varphi_{n+1}\|_\ZZ \leq 2^{-n-1}\kappa$.  Then, set $v_{n+1}:=v_n - T\varphi_{n+1} \in \XX(K_{n+1})$. In view of the extension property \eqref{ext ppt}, we may extend $v_{n+1}$ to an element of $\XX\equiv\XX(\Omega)$ (without relabelling) such that
\begin{equation}\label{vn+1}
\|v_{n+1}\|_{\XX} \leq 8^{-n}\eta.
\end{equation}	
	
	Now, define $\tilde{\phi}$ and $A$ as follows:
	\begin{equation*}
	\begin{cases}
	\tilde{\phi} := \sum_{j=1}^\infty \varphi_j;\\
	A:= \Omega \sim \bigcap_{j=1}^\infty K_j.
	\end{cases}
	\end{equation*}
It is clear that $A$ is open and $\hau^N(A) \leq \sum_{j=1}^\infty\hau^N(\Omega \sim K_j) \leq \sum_{j=1}^\infty 2^{-j}\delta\hau^N(\Omega) = \delta\hau^N(\Omega)$. Also, $\tilde{\phi}$ is a well-defined element of $\ZZ$ with $\|\tilde{\phi}\|_\ZZ \leq \kappa$. On the other hand, by \eqref{vn+1} we know that $\{v_k\}$ is a  Cauchy sequence in $\XX$, hence it converges to a limit, which must be $0$. Since $T\varphi_{n+1}=v_n-v_{n+1}$, it implies that $\{T\varphi_n\}$ converges in $\XX$. Moreover, by \eqref{eq norm} one obtains
\begin{align*}
\sum_{j=1}^N \|\varphi_j\|_\YY &\leq C_0 \sum_{j=1}^N \Big( \|T\varphi_j\|_\XX + \|\varphi_j\|_\ZZ \Big)\\
&\leq C_0\bigg( \|v\|_{\XX} +  \sum_{j=1}^N (8^{-j-1}+8^{-j})\eta + \sum_{j=1}^N2^{-j}\kappa\bigg)\\
& \leq C_0\Big(\|v\|_{\XX} +  \frac{\eta}{2} + \kappa\Big).
\end{align*}
Sending $N \nearrow \infty$, we deduce that $\tilde{\phi} \in \YY$; in fact, $\|\tilde{\phi}\|_\YY \leq C_0(\|v\|_\XX+ \nicefrac{\eta}{2}+\kappa)$. 

Finally, let us prove that $v=T\tilde{\phi}$ on $\Omega \sim A$. Indeed, \begin{align*}
v-T\tilde{\phi} &= v_0-T\varphi_1 - T\varphi_2 - T\varphi_3 - \ldots
\end{align*}
where $v_0-T\varphi_1=v_1$ on $K_1$, $v_0-T\varphi_1-T\varphi_2=v_2$ on $K_1\cap K_2$, and so on. In view of the extension property \eqref{ext ppt}, we have
\begin{equation*}
\|v_j\|_{\XX\big(\bigcap_{i=1}^j K_i\big)}  \leq (1+s)  4^{-j}\eta.
\end{equation*}
Sending $j\nearrow \infty$ yields that $\Big\|\widehat{v-T\tilde{\phi}}\Big\|_\XX=0$, where $\widehat{v-T\tilde{\phi}}$ denotes the extension of $v-T\tilde{\phi}$ from $\Omega \sim A = \bigcap_{j=1}^\infty K_j$ to $\Omega$, whose existence is ensured by \eqref{ext ppt}. But $v-T\tilde{\phi}$ is defined pointwise on $\Omega\sim A$, so it must be zero thereon, thanks to \eqref{top comparison}. The proof is now complete. \end{proof}

\begin{remark}\label{rem: ext ppt}
	The extension property \eqref{ext ppt} is crucial for our proof of Theorem~\ref{thm: approx scheme}. It is unclear whether an analogous result of Alberti's theorem holds for $\XX=C^{k,\alpha}(\Omega;\R^N)$, $\YY = C^{k+1,\alpha}_0(\Omega)$, $\ZZ=C^{k,\alpha}(\Omega)$, and $T = \na$ for any $k \geq 1$, $\alpha \in ]0,1]$.  There, \eqref{ext ppt} amounts to the hypotheses for Whitney extensions of H\"{o}lder/Lipschitz functions, which are not automatically verified as in the case of Tietze extension for continuous functions. This is reminiscent of S. Delladio's works (see, {\it e.g.}, \cite{de2}) on the higher-order rectifiability criteria on certain generalised fibre bundles. 

\end{remark}

\section{Rectifiable $N$-currents that are non-$C^2$-rectifiable: arbitrary $N$}

An important problem in geometric measure theory  concerns the $C^2$-rectifiability of Legendrian currents, which are natural generalisations of graphs of Gauss maps on hypersurfaces with weaker regularity. Pioneered by Anzellotti--Serapioni \cite{as}, studies on the $C^2$-rectifiability problem have been carried out by Delladio \cite{de1, de2} and Fu \cite{fu1, fu2}, among many other researchers. 

It was first observed in \cite{fu2} that the $C^2$-rectifiability problem is closely related to Alberti's  Theorem~\ref{thm: alberti}. In particular, Fu showed (\cite{fu2}, Proposition~1) that Alberti's construction yields a rectifiable current on $\R^3$ which is not $C^2$-rectifiable. For an $N$-dimensional Riemannian manifold $\M$, we say that $E \subset \M$ is a $C^s$-rectifiable set if and only if there are Borel measurable sets $\{E_0, E_1, E_2, \ldots\} \subset \M$ satisfying $E=\bigcup_{j=0}^\infty E_0$, $\hau^N(E_0)=0$, and for each $j=1,2,\ldots$ there exists $f_j \in C^s(\R^N;\M)$ such that $E_j$ = the image of $f_j$. Thus, rectifiable $\equiv$ $C^1$-rectifiable.

Combining ideas from \cite{fu1, fu2} and elements of contact geometry ({\it cf. e.g.} \cite{e}), we can obtain a ``natural'' rectifiable current of {\em arbitrary degree} that is non-$C^2$-rectifiable. 

\begin{theorem}\label{thm: current}
Let $(\M,g)$ be an arbitrary $N$-dimensional closed Riemannian manifold of with positive volume; $N \geq 2$. There is a rectifiable $N$-current $\sss \in \mathscr{R}_N(\J^1\M)$ which is non-$C^2$-rectifiable. Here $\J^1\M := T^*\M \times \R$ is the {\em $1$-jet space} of $\M$.
\end{theorem}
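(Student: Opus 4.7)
The plan is to adapt Fu's argument (\cite{fu2}, Proposition~1) to an arbitrary closed Riemannian $N$-manifold ($N\geq 2$), by exploiting the contact structure on $\J^1\M=T^*\M\times\R$. Throughout, let $\alpha_c:=dt-\lambda$ denote the canonical contact $1$-form on $\J^1\M$, where $\lambda$ is the Liouville $1$-form on $T^*\M$ and $t$ is the $\R$-coordinate. For any $f\in C^1(\M)$, the $1$-jet section $j^1 f: x\mapsto (x,df(x),f(x))$ is a continuous Legendrian embedding (i.e.\ $(j^1 f)^*\alpha_c=df-df=0$), smooth if and only if $f\in C^2$.

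\textbf{Step 1 (A ``rough'' $1$-form).} I pick a smooth $1$-form $\beta$ on $\M$ with $d\beta(x_0)\neq 0$ at some $x_0\in\M$; such forms abound since $N\geq 2$. By continuity, $d\beta\neq 0$ on an open set $U\ni x_0$. The key observation is: if $u\in C^2(V)$ satisfies $du=\beta$ on a Borel set $E\subset V\subset U$ with $\hau^N(E)>0$, then differentiating at any Lebesgue density point of $E$ yields $d\beta=d^2 u=0$ pointwise, a contradiction. Hence $\beta$ coincides with the differential of no $C^2$ function on any subset of $U$ of positive $\hau^N$-measure.

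\textbf{Step 2 (Alberti's theorem and the current $\sss$).} Applying the Riemannian version of Theorem~\ref{thm: alberti} (\emph{cf.}\ \cite{mp}, Proposition~2.3) to the vectorfield dual to $\beta$, restricted to a chart around $x_0$ if needed, yields $\phi\in C^1(\M)$ and an open set $A\subset\M$ with $\hau^N(A)\leq\e\,\hau^N(\M)$ and $d\phi=\beta$ on $\M\sim A$. Since $\beta$ is smooth, elliptic regularity (or direct integration) gives $\phi\in C^\infty(\M\sim\overline{A})$, so $j^1\phi$ is a smooth Legendrian embedding on this open set. I then set
\[
\sss := (j^1\phi)_\#\,[\![\M\sim\overline{A}]\!]\in\mathscr{R}_N(\J^1\M),
\]
the push-forward of the integration current along the Lipschitz map $j^1\phi|_{\M\sim\overline{A}}$; this is a rectifiable Legendrian $N$-current of finite mass (bounded in terms of $\|\phi\|_{C^1}$ and $\hau^N(\M)$).

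\textbf{Step 3 (Failure of $C^2$-rectifiability).} Suppose, towards a contradiction, that $\mathrm{spt}(\sss)$ is covered modulo an $\hau^N$-null set by images $\{f_j(\R^N)\}_{j\in\mathbb N}$ of $C^2$ maps. By uniqueness of approximate tangent planes to a rectifiable set, on each intersection $f_j(\R^N)\cap\mathrm{spt}(\sss)$ of positive $\hau^N$-measure the tangent $N$-plane of the $C^2$ piece agrees $\hau^N$-a.e.\ with that of $\mathrm{spt}(\sss)$, which lies in $\ker\alpha_c$. Thus each such piece is a $C^2$-Legendrian submanifold on a set of positive $\hau^N$-measure. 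By the Darboux-type local normal form for Legendrian submanifolds of $\J^1\M$ (\emph{cf.}\ \cite{e}), each such piece is, after restriction and passage to a horizontal chart, the $1$-jet graph $j^1 u_j$ of some $C^2$ function $u_j$ on an open $V_j\subset\M$. Consequently $du_j=\beta|V_j$ on a set of positive $\hau^N$-measure, contradicting Step~1. Hence $\sss$ is rectifiable but not $C^2$-rectifiable. The main obstacle is making this last step rigorous, in particular the verification that a $C^2$-rectifiable covering of a Legendrian rectifiable current descends to local $C^2$-Legendrian graphs; this hinges on the compatibility between approximate tangent planes of rectifiable currents and the contact distribution $\ker\alpha_c$, together with the $1$-jet normal form for Legendrian submanifolds of $\J^1\M$.
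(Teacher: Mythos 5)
The overall strategy (exploit Alberti's theorem to manufacture a $C^1$-but-not-$C^2$ Legendrian graph) is the right one, and your Step~1 density-point argument is a clean way to formalise the obstruction. However, Step~2 contains a fatal error. You set $\sss := (j^1\phi)_\#[[\M\sim\overline{A}]]$ and simultaneously argue, correctly, that $\phi\in C^\infty(\M\sim\overline{A})$ because $d\phi=\beta$ there with $\beta$ smooth. These two facts together \emph{defeat} the construction: if $\M\sim\overline{A}$ is nonempty and meets $U$, then $\phi$ is a smooth function with $d\phi=\beta$ on an open subset of $U$, which is exactly what your Step~1 rules out (it forces $d\beta=0$ on that open set); and in any case $\sss$ would then be the integration current over a $C^\infty$-submanifold, hence trivially $C^2$-rectifiable. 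The only way out of this contradiction is for $\M\sim\overline{A}$ to be essentially empty --- and indeed in Alberti's construction $A$ is typically a \emph{dense} open set, so $\M\sim\overline{A}=\emptyset$ and $\sss=0$, again trivially $C^2$-rectifiable. The whole point of the phenomenon (going back to Fu) is that $d\phi=\beta$ holds on a \emph{closed set of positive measure with empty interior}. You must therefore work with $\M\sim A$, not $\M\sim\overline A$. Concretely, the correct carrier is the set $S:=\{(x,\beta(x),\phi(x)) : x\in\M\sim A\}$, which is a rectifiable subset of positive $\hau^N$-measure of the graph of the $C^1$-map $x\mapsto(\beta(x),\phi(x))$, and the candidate current is $\sss=[[S]]$. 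With this fix, your Step~3 can in fact be streamlined: one does not really need Darboux normal forms. If $f_j\in C^2$ and $\hau^N(f_j(\R^N)\cap S)>0$, then (after choosing $\e$ so that $(\M\sim A)\cap U$ has positive measure and selecting a $j$ whose image covers part of it) a.e.\ point of overlap has a well-defined approximate tangent $N$-plane of $S$ that projects isomorphically onto $T_x\M$; by the $C^2$ implicit function theorem, near such a point $f_j(\R^N)$ is the graph of a $C^2$-map $(b_j,u_j)$ over $\M$, and $u_j=\phi$, $b_j=\beta$ on a set $E_j\subset U$ of positive measure. A density-point argument applied to the $C^1$-function $u_j-\phi$ forces $du_j=\beta$ a.e.\ on $E_j$, and then your Step~1 gives the contradiction. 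Finally, note that the paper's proof takes a genuinely different route: it applies the manifold version of Alberti's theorem (Lemma~\ref{lem: forms}) to the tautological $1$-form $\alpha$ on $T^*\M$ (so that the constructed current lies over a subset of $T^*\M$, not of $\M$), and then invokes Fu's lemma that a $C^2$-rectifiable current $\TT$ with $\TT\mres\beta=0$ must satisfy $\TT\mres d\beta=0$, combined with the nondegeneracy of $d\alpha$; your approach, once repaired, is instead a direct $1$-jet argument on $\M$ closer in spirit to Fu's original Proposition~1.
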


For this purpose, we need a generalisation of Alberti's Theorem~\ref{thm: alberti}: any differential form is ``nearly'' exact. Moonens--Pfeffer (\cite{mp}, Proposition 2.1) proved this on Euclidean spaces; their  arguments readily generalise to manifolds by a partition of unity argument. We write out the details for completeness. Here and throughout, $\mathscr{D}^k$ denotes the space of $C^\infty$-differential $k$-forms, and ${\rm Vol}_h$ is the volume measure induced by  Riemannian metric $h$. 

\begin{lemma}\label{lem: forms}
Let $(\Sigma,h)$ be a closed, smooth Riemannian manifold of dimension $m \geq 2$ with ${\rm Vol}_g(\Sigma)>0$, let $\omega \in \mathscr{D}^k(\Sigma)$ for $k \geq 1$, and let $\e>0$ be arbitrary. There exist an open set $A \subset \Sigma$ and a $C^1$-differential $(k-1)$-form $\gamma$ on $\Sigma$, such that ${\rm Vol}_h(A) \leq \e {\rm Vol}_h(\Sigma)$ and $\omega = d\gamma$ on $\Sigma \sim A$. 
\end{lemma}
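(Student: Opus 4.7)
The plan is to upgrade the Euclidean form of the statement, due to Moonens--Pfeffer (\cite{mp}, Proposition~2.1), to the manifold setting via a partition of unity. I shall treat the Euclidean version as a black box: given a bounded open $U \subset \R^m$, a smooth $k$-form $\omega$ on $U$ with $k \geq 1$, and $\e>0$, there exist an open set $A \subset U$ with $\hau^m(A) \leq \e\hau^m(U)$ and a compactly supported $C^1$ $(k-1)$-form $\gamma$ on $U$ such that $\omega = d\gamma$ on $U \sim A$.

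First, I would cover the closed manifold $\Sigma$ by finitely many coordinate charts $\{(U_j,\varphi_j)\}_{j=1}^J$ with each $\varphi_j(U_j) \subset \R^m$ a bounded open set, choose open refinements $V_j \Subset U_j$ still covering $\Sigma$, and fix a smooth partition of unity $\{\chi_j\}$ subordinate to $\{U_j\}$ with ${\rm spt}(\chi_j) \subset V_j$. Decomposing $\omega = \sum_{j=1}^J \chi_j\omega$, each summand $\omega_j := \chi_j \omega$ is a smooth $k$-form compactly supported in $V_j$, and its pushforward $\tilde\omega_j := (\varphi_j)_\ast \omega_j$ is a smooth, compactly supported $k$-form on $\varphi_j(U_j) \subset \R^m$.

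Next I would apply the Euclidean result to each $\tilde\omega_j$ with tolerance $\nicefrac{\e}{CJ}$, where $C \geq 1$ is an upper bound on the measure distortion between $(\varphi_j)_\ast\,{\rm Vol}_h$ and the Lebesgue measure $\hau^m$ over $\varphi_j(\overline{V_j})$, finite by smoothness of $\varphi_j$ and compactness of $\Sigma$. This yields open sets $\tilde A_j \subset \varphi_j(U_j)$ and compactly supported $C^1$ $(k-1)$-forms $\tilde\gamma_j$ on $\varphi_j(U_j)$ with $\tilde\omega_j = d\tilde\gamma_j$ off $\tilde A_j$. Pulling back by $\varphi_j^{-1}$ and extending by zero then produces globally defined $C^1$ $(k-1)$-forms $\gamma_j$ on $\Sigma$ and open sets $A_j \subset U_j$ with ${\rm Vol}_h(A_j) \leq \nicefrac{\e\,{\rm Vol}_h(\Sigma)}{J}$ and $\omega_j = d\gamma_j$ on $\Sigma \sim A_j$. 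The compact support of $\tilde\gamma_j$ inside the chart is what guarantees that extension-by-zero preserves $C^1$ regularity across $\partial U_j$.

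Finally, set $\gamma := \sum_{j=1}^J \gamma_j$ and $A := \bigcup_{j=1}^J A_j$. Then $A$ is open, ${\rm Vol}_h(A) \leq \e\,{\rm Vol}_h(\Sigma)$, and on $\Sigma \sim A$, linearity of $d$ combined with $\sum_j \chi_j \equiv 1$ yields $d\gamma = \sum_j d\gamma_j = \sum_j \omega_j = \omega$. The only delicate point --- and what I regard as the main ``obstacle'' --- is the quantitative bookkeeping of the measure distortion between the Euclidean $\hau^m$ in each chart and the intrinsic volume ${\rm Vol}_h$; this is handled uniformly by the finite constant $C$ together with the finiteness of the chart cover, so no genuinely new analytic input beyond the Euclidean proposition is required.
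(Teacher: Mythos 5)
Your proposal is correct and follows essentially the same route as the paper: cover $\Sigma$ by finitely many charts, localize $\omega$ via a partition of unity, apply a Euclidean Luzin-type exactness result in each chart, pull back, extend by zero, and sum. The only cosmetic difference is that you invoke Moonens--Pfeffer's Proposition~2.1 as a black box, whereas the paper re-derives the Euclidean $k$-form case inside each chart by decomposing $\pi^{(i)}$ into multi-index components $b^{(i)}_\lambda\,dx^{\lambda_1}\wedge\cdots\wedge dx^{\lambda_k}$ and applying Alberti's Theorem~\ref{thm: alberti} (the gradient/vectorfield version) to each scalar coefficient $b^{(i)}_\lambda$; otherwise the bookkeeping with the distortion constant and the compact support of the local primitives matches the paper's argument.
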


\begin{proof}[Proof of Lemma~\ref{lem: forms}]

Let $\{U_i\}_{i=1}^I$ be a finite smooth atlas for $\Sigma$ with coordinate mappings $\psi_i:U_i \map \R^m$, and let $\{\chi_i\}_{i=1}^I$ be a smooth partition of unity subordinate to $\{U_i\}$. Then we get
\begin{equation*}
\pi^{(i)}:=(\psi_i)_\# (\omega \chi_i) \in \mathscr{D}^k(\R^m)\qquad \text{ for each } i \in \{1,2,\ldots, I\},
\end{equation*}
where $(\psi_i)_\#$ denotes the pushforward under $\psi_i$. Write $\{x^1, \ldots, x^m\}$ for the canonical coordinates on $\R^m$ (fixed for all $i$), and $\Lambda(m,k)$ for the set of multi-indices $\lambda=(\lambda_1, \ldots, \lambda_k) \in \mathbb{N}^k$ such that $1 \leq \lambda_1 < \lambda_2 < \ldots < \lambda_k \leq m$. Then, for each $i \in \{1,2,\ldots, I\}$, we can find smooth coefficient functions $b^{(i)}_\lambda$ compactly supported on $\psi_i(U_i) \subset \R^m$ such that
\begin{equation*}
\pi^{(i)}(x) = \sum_{\lambda \in \Lambda(m,k)} b^{(i)}_\lambda(x) dx^{\lambda_1} \wedge \ldots \wedge dx^{\lambda_k}\qquad \text{ for } x \in \psi_i(U_i).
\end{equation*}

Now, invoking Alberti's Theorem~\ref{thm: alberti} and the canonical isomorphism between vectorfields and $1$-forms, for any $\e'>0$ and $i \in \{1,2,\ldots,I\}$ we can find an open set $\Omega^{(i)} \subset \psi_i(U_i)$ (thanks to the finiteness of the indexing set $\Lambda(m,k)$) and a $C^1_c(\psi_i(U_i))$-map $\phi^{(i,\lambda)}$, such that $\hau^m(\Omega^{(i)}) \leq \e'\hau^m(\psi_i(U_i))$ and $b^{(i)}_\lambda dx^{\lambda_1} = d\phi^{(i,\lambda)}$ outside $\Omega^{(i)}$. Therefore, we get
\begin{align*}
\pi^{(i)} = d\bigg( \sum_{\lambda \in \Lambda(m,k)} \phi^{(i,\lambda)}dx^{\lambda_2} \wedge \ldots\wedge dx^{\lambda_k} \bigg)\qquad \text{ on } \psi_i(U_i) \sim \Omega^{(i)}.
\end{align*}

Take $A:= \bigcup_{i=1}^I \psi_i^{-1}(\Omega^{(i)})$: it is an open set in $\Sigma$; also, by suitably choosing $\e'$ depending on $\e$, $h$, and $\{\psi_i\}$,  we can ensure that ${\rm Vol}_h(A) \leq \e {\rm Vol}_h(\Sigma)$. Furthermore, on $\Sigma \sim A$ there holds 
\begin{align*}
\omega &= \sum_{i=1}^I \omega\chi_i\\
&=  \sum_{i=1}^I\psi_i^\#  d\bigg( \sum_{\lambda \in \Lambda(m,k)} \phi^{(i,\lambda)}dx^{\lambda_2} \wedge \ldots\wedge dx^{\lambda_k} \bigg)\\
&= d\bigg\{ \sum_{i=1}^I \psi_i^\#  \bigg( \sum_{\lambda \in \Lambda(m,k)} \phi^{(i,\lambda)}dx^{\lambda_2} \wedge \ldots\wedge dx^{\lambda_k}  \bigg) \bigg\}.
\end{align*}
The argument inside $\{\cdots\}$ in the last line is clearly a $C^1$-differential $k-1$-form on $\Sigma$. The proof is completed by calling it $\gamma$.  \end{proof}

\begin{proof}[Proof of Theorem~\ref{thm: current}]
	It is proved by Fu (\cite{fu1}, Lemma~1.1; also see \cite{fu2}, Proposition~2) that if a rectifiable $N$-current $\TT$ is carried by a $C^2$-rectifiable set, and if $\beta$ is a smooth differentiable form of degree $\leq (N-1)$ such that $\TT \mres \beta =0$, then $\TT \mres d\beta = 0$. To prove Theorem~\ref{thm: current}, we shall construct $\sss \in \mathscr{R}_N(\J^1\M)$ and $\beta \in \mathscr{D}^1(\J^1\M)$ such that $\sss \mres \beta =0$ while $\sss \mres d\beta \neq 0$.

To begin with, let $\alpha \in \mathscr{D}^1(T^*\M)$ be the tautological $1$-form ({\it a.k.a.} the Liouville/canonical $1$-form) on the cotangent bundle $T^*\M$. We endow $T^*\M$ with the Sasaki metric $\gs$ associated to the Riemannian metric $g$ on $\M$. By assumption, ${\rm Vol}_\gs(T^*\M) = V_0 > 0$. 

Then, by Lemma~\ref{lem: forms}, for any $\varpi \in ]0,1[$ there exist an open set $U\Subset T^*\M$ and a $C^1$-function $\phi:T^*\M\map \R$ such that ${\rm Vol}_\gs(U) \geq \varpi V_0 >0$ and $d\phi=\alpha$ on $U$. 
	
Set $\sss := [[{\rm graph}_U \phi]]$, the current obtained by integrating on the graph of $\phi$ over $U$ with respect to the Sasaki metric. Since $U$ is an open set and $\phi$ is $C^1$, $\sss$ is indeed a rectifiable $N$-current carried by the $1$-jet space $\J^1\M$. Moreover, we take  the natural contact form $\beta := dz-\alpha \in \mathscr{D}^1(\J^1\M)$, where $dz$ is the obvious volume $1$-form on the $\R$ factor of $\J^1\M$. 

Clearly, $\sss \mres \beta$ equals $d\phi-\alpha$ restricted to $U$, which is zero by construction. On the other hand, the $n$-fold wedge product $(d\beta)^{\wedge N}\equiv \pm (d\alpha)^{\wedge N}$. But $d\alpha$ is the canonical symplectic form on $T^*\M$, so $(d\alpha)^{\wedge N}$ coincides the volume form on $(T^*\M,\gs)$. Thus, we have $\sss \mres (d\beta)^{\wedge N} =\pm{\rm Vol}_\gs(U) \neq 0$, which implies that $\sss \mres d\beta \neq 0$. The proof is now complete.  \end{proof}

\end{document}